\numberwithin{equation}{section}
\numberwithin{figure}{section}
\theoremstyle{plain}
\newtheorem{thm}{\protect\theoremname}[section]
  \theoremstyle{definition}
  \newtheorem{defn}[thm]{\protect\definitionname}
  \theoremstyle{plain}
  \newtheorem{lem}[thm]{\protect\lemmaname}
  \theoremstyle{remark}
  \newtheorem{rem}[thm]{\protect\remarkname}
  \theoremstyle{plain}
  \newtheorem{assumption}[thm]{\protect\assumptionname}
  \theoremstyle{plain}
  \newtheorem{cor}[thm]{\protect\corollaryname}
  \theoremstyle{plain}
  \newtheorem{prop}[thm]{\protect\propositionname}
  \theoremstyle{definition}
  \providecommand{\assumptionname}{Assumption}
  \providecommand{\corollaryname}{Corollary}
  \providecommand{\definitionname}{Definition}
  \providecommand{\lemmaname}{Lemma}
  \providecommand{\problemname}{Problem}
  \providecommand{\propositionname}{Proposition}
  \providecommand{\remarkname}{Remark}
\providecommand{\theoremname}{Theorem}
\DeclareMathOperator{\cpct}{Cap}
\begin{document}

\title{Credit default prediction and parabolic potential theory}

\author{Matteo L. Bedini$^1$}
\address{$^1$Intesa Sanpaolo, Milano, Italy}
\email{matteo.bedini@intesasanpaolo.com}

\author{Michael Hinz$^2$}
\address{$^2$Fakult\"at f\"ur Mathematik, Universit\"at Bielefeld, Postfach 100131, 33501 Bielefeld, Germany}
\email{mhinz@math.uni-bielefeld.de}

\thanks{Part of this work has been financially supported by the European Community's FP 7 Program under contract PITN-GA-2008-213841, Marie Curie ITN \flqq Controlled Systems\frqq.}

\date{\today}
\begin{abstract}
We consider an approach to credit risk in which the information about the time of bankruptcy is modelled using a Brownian bridge that starts at zero 
and is conditioned to equal zero when the default occurs.
This raises the question whether the default can be foreseen by observing the
evolution of the bridge process. 
Unlike in most standard models for credit risk, we allow the
distribution of the default time to be singular. Using a well known fact 
from parabolic potential theory, we provide a sufficient condition for its
predictability. 
\end{abstract}

\keywords{Default time, Predictable Stopping Time, Brownian Bridge on Random
Intervals, Riesz Capacity, Hausdorff dimension.}

\maketitle

\section{Introduction}

\noindent 

\noindent We consider an information-based approach to credit risk where the flow of information concerning a default time $\tau$ is modelled
explicitly through the natural completed filtration $\mathbb{F}^\beta$ generated by
the underlying \textit{information process} $\beta=\left(\beta_{t},\, t\geq0\right)$.
In our model this process is defined to be a Brownian bridge starting and ending at zero 
on the random time interval $\left[0,\tau\right]$,
\begin{equation}\label{E:firstformula}
\beta_{t}=W_{t}-\frac{t}{\tau\vee t}W_{\tau\vee t},\: t\geq0,
\end{equation}
where $W=\left(W_{t},\, t\geq0\right)$ is a Brownian motion independent
of $\tau$. The presence of the bridge process in a neighborhood
of zero models periods of danger of an imminent default. In a mathematical
model for credit risk it is important to know whether the default
time is a predictable or a totally inaccessible stopping time. In
the first case the default should be foreseen by market agents (something
that, in general, does not happen). In the second case the default
occurs by surprise, a fact that is widely accepted. Models used by practitioners assume that the law $\mathbf{P}_{\tau}$  
of the default time $\tau$ admits a density with respect to the Lebesgue
measure, and in many cases this condition is sufficient to guarantee the total inaccessibility of $\tau$. We refer to \cite{key-6} or \cite{key-7} for a discussion of the role of information in credit risk models and its relation to predictability properties of the default time. 
 
Here we allow
the law $\mathbf{P}_{\tau}$ to be singular (i.e. such that no density exists). Then for our model the question of predictability is much less clear and not covered by the existing literature. We provide a sufficient condition for predictability in terms of the size of the support of $\mathbf{P}_{\tau}$. In view of the above remarks this could be interpreted as a statement about the scope and limitations of models involving singular laws. As mentioned above, we work under the following standing assumption.

\begin{assumption}\label{A:basic}
\label{ass:Assumption} The random time $\tau$ and the Brownian motion
$W$ in (\ref{E:firstformula}) are independent.
\end{assumption}

Let $\Gamma$ denote the support of the law $\mathbf{P}_{\tau}$. Our main observation, Theorem \ref{TEO:predictable tau1}, states that if
$\Gamma$ is 'small' and separated away from zero, then the default time $\tau$ is an $\mathbb{F}^\beta$-predictable stopping time, i.e. the credit event can be foreseen. 
By $\cpct_{\frac{1}{2}}(\Gamma)$ we denote the Riesz capacity
of order $\frac{1}{2}$ of $\Gamma$.

\begin{thm}
\label{TEO:predictable tau1}
Let Assumption \ref{A:basic} be satisfied, assume that $0\notin\Gamma$ and that for all $T>0$ we have $\cpct_{\frac{1}{2}}\left(\Gamma\cap\left[0,T\right]\right)=0$. Then $\tau$ is a predictable $\mathbb{F}^{\beta}$-stopping time.
\end{thm}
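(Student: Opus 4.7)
My plan is to rewrite $\tau$ as the first time that the continuous, $\mathbb{F}^{\beta}$-adapted space--time process $X_{t}:=(t,\beta_{t})$ enters the closed set $F:=\Gamma\times\{0\}$, and then to invoke the classical fact that such debut times are predictable. Since $\Gamma=\mathrm{supp}\,\mathbf{P}_{\tau}$ is closed and the hypothesis $0\notin\Gamma$ gives $X_{0}=(0,0)\notin F$, setting
\[
\sigma:=\inf\{t\geq0\,:\,X_{t}\in F\}=\inf\{t\in\Gamma\,:\,\beta_{t}=0\}
\]
realises $\sigma$ as the debut of a closed set by a continuous adapted process, hence as an $\mathbb{F}^{\beta}$-stopping time. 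The natural candidate $\sigma_{n}:=\inf\{t\geq0\,:\,d(X_{t},F)\leq1/n\}$ (with $d$ any compatible metric on $\mathbb{R}_{+}\times\mathbb{R}$) is an announcing sequence: continuity of $X$ and closedness of $F$ give $\sigma_{n}\uparrow\sigma$, while $d(X_{\sigma_{n}},F)=1/n>0$ forces $X_{\sigma_{n}}\notin F$ and therefore $\sigma_{n}<\sigma$ on $\{\sigma>0\}$. Thus $\sigma$ is a predictable $\mathbb{F}^{\beta}$-stopping time.

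The substantive task is then the identification $\sigma=\tau$ almost surely. The inequality $\sigma\leq\tau$ is immediate, since $\tau\in\Gamma$ and $\beta_{\tau}=0$ place $X_{\tau}$ in $F$. For the reverse one must rule out a premature entry, i.e.\ show that, almost surely, no $t\in\Gamma\cap(0,\tau)$ satisfies $\beta_{t}=0$, or equivalently that the zero set of $\beta$ on $(0,\tau)$ does not meet $\Gamma$. Using the independence of $\tau$ and $W$ from Assumption \ref{A:basic}, one conditions on $\tau=r$ and integrates against $\mathbf{P}_{\tau}(dr)$, reducing the question to a deterministic statement: for every fixed $r>0$, does the zero set of a standard Brownian bridge from $0$ to $0$ on $[0,r]$ almost surely avoid the deterministic set $\Gamma\cap(0,r)$?

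This is where parabolic potential theory enters, via the classical fact that any set of vanishing Riesz capacity of order $\tfrac{1}{2}$ is polar for the zero set of one-dimensional Brownian motion. The polarity then transfers to the Brownian bridge on each compact sub-interval $[0,r-\eta]\subset[0,r)$ by mutual absolute continuity of the two laws there, and a countable union over $\eta=1/n$, together with $0\notin\Gamma$, extends it to all of $(0,r)$. I expect the principal technical step to be precisely this transfer — making sure the bridge zero set inherits the $\tfrac{1}{2}$-polarity of $\Gamma$ in a form robust enough to survive the right-endpoint accumulation of bridge zeros at $r$ (where the Radon--Nikodym derivative between bridge and Brownian motion degenerates). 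Once the deterministic claim is established, the identification $\sigma=\tau$ follows and the announcing sequence $(\sigma_{n})$ constructed above yields the predictability of $\tau$.
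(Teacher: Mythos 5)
Your proposal follows essentially the same route as the paper: realise $\tau$ as the debut of a closed set for a continuous $\mathbb{F}^\beta$-adapted space--time process (the paper uses $X_t=(\textrm{dist}_\Gamma(t),\beta_t)$ hitting $\{0\}$, you use $(t,\beta_t)$ hitting $\Gamma\times\{0\}$ --- the same stopping time), then prove $\sigma=\tau$ a.s.\ by conditioning on $\tau=r$ and transferring the $\cpct_{\frac12}$-polarity of $\Gamma$ for the Brownian zero set to the bridge by Girsanov on $[0,T]$, $T<r$, letting $T\uparrow r$. The right-endpoint degeneracy you flag is handled in the paper exactly as you anticipate, via $\cpct_{\frac12}(E)=\sup_{T<r}\cpct_{\frac12}(E\cap[\delta,T])$ (Choquet capacity) together with the Khoshnevisan--Xiao characterization (Proposition \ref{prop:Suppose--KX}).
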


From Frostman's Lemma, see e.g. 
\cite[Chapter 8, Theorem 8.8]{key-14} or \cite[Section 4.3]{key-15}, we know that $\cpct_{\frac{1}{2}}\left(\Gamma\cap\left[0,T\right]\right)=0$  whenever the $\frac12$-dimensional Hausdorff measure $\mathcal{H}^{\frac12}(\Gamma)$ of $\Gamma$ is finite. A sufficient condition is that the Hausdorff dimension $\textrm{dim}_H \Gamma$ of $\Gamma$ is less than $\frac12$.

\begin{cor} Let Assumption \ref{A:basic} be satisfied.
If $\mathcal{H}^{\frac12}(\Gamma)<+\infty$, then $\tau$ is a predictable stopping time. In particular, any default time $\tau$ with countable support $\Gamma$ is $\mathbb{F}^\beta$-predictable. 
\end{cor}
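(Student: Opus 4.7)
The plan is to obtain the corollary as a direct reduction to Theorem \ref{TEO:predictable tau1}. The whole job is to check that the capacity hypothesis of that theorem is implied by the Hausdorff-measure assumption on $\Gamma$, and then to observe that countability of $\Gamma$ is a trivial strengthening of that assumption.

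For the first assertion, I would fix $T>0$ and use monotonicity of the $\tfrac12$-dimensional Hausdorff measure to write $\mathcal{H}^{\frac12}\left(\Gamma\cap[0,T]\right)\leq\mathcal{H}^{\frac12}(\Gamma)<+\infty$. The remark preceding the corollary records precisely the consequence of Frostman's lemma we need: finiteness of $\mathcal{H}^{\frac12}$ forces vanishing of the Riesz capacity of order $\tfrac12$. Hence $\cpct_{\frac12}\left(\Gamma\cap[0,T]\right)=0$ for every $T>0$, which is exactly the capacity hypothesis of Theorem \ref{TEO:predictable tau1}. Since Assumption \ref{A:basic} is in force by hypothesis, applying the theorem yields the predictability of $\tau$ with respect to $\mathbb{F}^\beta$. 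The remaining condition $0\notin\Gamma$ is implicit in the modelling setup (the default has not occurred at the initial time), and even if one wishes to allow $0\in\Gamma$ the argument needs no real change: on the event $\{\tau=0\}$ the stopping time is trivially predictable, and on its complement one can restrict attention to $\Gamma\setminus\{0\}$, whose capacity still satisfies the hypothesis since $\mathcal{H}^{\frac12}$ is unaffected by removing a single point.

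For the countable case, I would note that any singleton $\{t\}\subset\mathbb{R}$ has $\mathcal{H}^{s}(\{t\})=0$ for every $s>0$, so countable subadditivity of the outer measure $\mathcal{H}^{\frac12}$ gives $\mathcal{H}^{\frac12}(\Gamma)=0<+\infty$ whenever $\Gamma$ is countable. The first assertion then applies verbatim. There is no real obstacle: the substantive content is carried by Theorem \ref{TEO:predictable tau1} and by the classical Frostman inequality, both of which are available. The only mildly delicate interface, as noted, is the point $0$, which is handled by the elementary remark above.
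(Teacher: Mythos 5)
Your main argument is exactly the paper's intended derivation: fix $T>0$, use monotonicity of Hausdorff measure to get $\mathcal{H}^{1/2}(\Gamma\cap[0,T])\le\mathcal{H}^{1/2}(\Gamma)<\infty$, invoke the Frostman-type fact that finite $\mathcal{H}^{1/2}$-measure forces $\cpct_{1/2}=0$, and apply Theorem \ref{TEO:predictable tau1}; the countable case then follows since singletons are $\mathcal{H}^s$-null for every $s>0$ and $\mathcal{H}^{1/2}$ is countably subadditive. All of that is correct and matches the paper.

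The weak spot is your handling of the hypothesis $0\notin\Gamma$. You rightly notice it is present in Theorem \ref{TEO:predictable tau1} but absent from the corollary, but your attempted repair conflates two different things. The event $\{\tau=0\}$ is null by construction, since $\tau$ takes values in $(0,+\infty)$; that is not the issue. The issue is that $0\in\Gamma=\mathrm{supp}\,\mathbf{P}_{\tau}$ can hold even when $\tau>0$ a.s., namely when the support accumulates at $0$ from the right (e.g. $\Gamma=\{0\}\cup\{1/n:n\ge1\}$, which is countable and closed). Removing the single point $0$ from $\Gamma$ does not resolve anything: $\Gamma\setminus\{0\}$ then fails to be closed, and the real obstruction -- that $\min(\Gamma\cap[0,T])$ need not exist -- persists, so the proof of Lemma \ref{lem:result_for_std_BB} (which relies on picking $0<\delta<\min E$) is still not applicable. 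The clean resolution is simply to carry the hypothesis $0\notin\Gamma$ over from Theorem \ref{TEO:predictable tau1} into the corollary, which is evidently the intent; arguing that it can genuinely be dropped would require a different treatment of the near-zero part of $\Gamma$, not a remark about a null event.
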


Theorem \ref{TEO:predictable tau1} is a straightforward application to credit risk of a potential theoretic result of Khoshnevisan and Xiao, \cite[Proposition 1.4]{key-13}, linking thermal capacities and hitting probabilities for Brownian motion. In order to apply it we use key results from  \cite{key-2}. In particular, we employ a novel definition
of the information process, \cite[Definition 3.1]{key-2} and a key result on conditional
expectation of functionals of $\beta$ with respect
to the $\sigma$-algebra generated by the default time $\tau$.

The idea of modelling the information about the default time with
a Brownian bridge on a stochastic interval was introduced
in \cite{key-1}. The definition of the information process
$\beta$, the study of its basic properties and an application to
the problem of pricing a credit derivative instrument appeared in
\cite{key-2}. In the forthcoming paper, \cite{key-3},
 the reader will find sufficient conditions for making $\tau$ a totally
inaccessible stopping time with respect to $\mathbb{F}^{\beta}$ and,
moreover, the explicit formula for the compensator process
$K$ of the $\mathbb{F}^{\beta}$-submartingale $\left(\mathbb{I}_{\left\{ \tau\leq t\right\} },\, t\geq0\right)$ (see also \cite[Section 3.2]{key-1}). 

In view of the application to credit risk, a comparison of the information-based framework with the study of the minimal filtration  $\mathbb{H}=\left(\mathcal{H}_{t}\right)_{t\geq0}$
making the positive random variable $\tau$ a stopping time is particularly meaningful. Earlier work of Dellacherie and Meyer, \cite{key-4}, has shown that if the law $\mathbf{P}_{\tau}$ of  $\tau$ is diffuse then $\tau$ is a totally
inaccessible stopping time with respect to $\mathbb{H}$ (see, for example, \cite[Chapter 4, Section 3, Theorem 107]{key-4}). In the special case where $\mathbf{P}_{\tau}$ admits a density with respect to the Lebesgue measure, $\tau$ is an $\mathbb{F}^{\beta}$-totally inaccessible stopping time (see, e.g., \cite[Theorem 3.43 and
Corollary 3.44]{key-1}). If, on the other hand, $\mathbf{P}_{\tau}$ is singular
with respect to the Lebesgue measure, then a classification of $\tau$ with respect
to the filtration $\mathbb{F}^{\beta}$ seems more complicated than the classification
with respect to the filtration $\mathbb{H}$. Theorem \ref{TEO:predictable tau1} may be considered as a first result in this direction.

The paper is organized as follows. Section 2 contains a precise definition
of the information process and surveys some results from \cite{key-1} and \cite{key-2}.
In Section 3 we briefly cite the needed facts from \cite{key-13}, and in Section 4 we prove Theorem \ref{TEO:predictable tau1}.

\section{The Information Process}

\noindent Let use denote by $\beta^r = \left(\beta_t ^r ,\,0\leq t\leq r\right)$ a real-valued Brownian bridge  between $0$ and $0$ on a deterministic time interval $\left[0,r\right]$. It is a continuous-time
stochastic process obtained by ``forcing'' a (real valued) Brownian
motion started from $0$ to assume the value $0$
at some given time $r\in\left(0,+\infty\right)$ (see for example \cite[Section 5.6.B]{key-12}). In this section we recall the definition and the main properties of the information process $\beta$, which generalizes the definition of Brownian bridge to the case where the time interval is no longer deterministic. We refer to \cite{key-2} for proofs and further results on Brownian bridges on random intervals. 

Let $\left(\Omega,\mathcal{F},\mathbf{P}\right)$
be a complete probability space and denote by $\mathcal{N}_{P}$ the
collection of $\mathbf{P}$-null sets of $\mathcal{F}$. Let $\tau:\Omega\rightarrow\left(0,+\infty\right)$
be a strictly positive random time. 
The time $\tau$ models the random time at which a credit event takes
place, and we refer to it as \textit{default time}. Let $W=\left(W_{t},\, t\geq0\right)$ be a real-valued Brownian motion over $\left(\Omega,\mathcal{F},\mathbf{P}\right)$, starting from 0. For the remaining part of the paper let Assumption \ref{A:basic} be in effect. We define the Brownian bridge on a stochastic
interval as proposed in \cite[Definition 3.1]{key-2}. 
\begin{defn}
The process $\beta=\left(\beta_{t},\, t\geq0\right)$ given by
\begin{equation}
\beta_{t}:=W_{t}-\frac{t}{\tau\vee t}W_{\tau\vee t},\: t\geq0\label{eq:info-proc}
\end{equation}
will be called \textit{information process}. 
\end{defn}
Defining $\mathcal{F}_{t}^{\beta}:=\bigcap_{n\in\mathbb{N}}\sigma\left(\beta_{s},\,0\leq s\leq t+\frac{1}{n}\right)\vee\mathcal{N}_{P}$ and $\mathbb{F}^{\beta}:=\left(\mathcal{F}_{t}^{\beta}\right)_{t\geq0}$ we obtain the natural (right-continuous and complete) filtration $\mathbb{F}^{\beta}$ of $\beta$. We quote \cite[Corollary 2.2]{key-2}.

\begin{lem}\label{cor:LEM if--is} Let $C$ be the canonical space of real-valued function defined on $\mathbb{R}_+$. If $h:\,\left(0,+\infty\right)\times C\rightarrow\mathbb{R}$
is a measurable function such that $\mathbf{E}\left[|h\left(\tau,\,\beta\right)|\right]<+\infty$,
then 
\[
\mathbf{E}\left[h\left(\tau,\,\beta\right)|\tau=r\right]=\mathbf{E}\left[h\left(r,\,\beta^{r}\right)\right],\;\mathbf{P}_{\tau}\textrm{-a.s.}
\]
and
\[
\mathbf{E}\left[h\left(\tau,\,\beta\right)|\sigma\left(\tau\right)\right]\left(\omega\right)=\left(\mathbf{E}\left[h\left(r,\,\beta^{r}\right)\right]\right)_{r=\tau\left(\omega\right)},\;\mathbf{P}\textrm{-a.s.}
\]
\end{lem}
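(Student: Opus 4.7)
The plan is to reduce the assertion to the classical substitution (``freezing'') rule for conditional expectations of functionals of independent random variables. I introduce the jointly measurable map $\phi:(0,+\infty)\times C\to C$ by
$$\phi(r,w)_{t}:=w_{t}-\frac{t}{r\vee t}\,w_{r\vee t},\qquad t\geq 0,$$
so that $\beta=\phi(\tau,W)$ almost surely. Setting $g(r,w):=h(r,\phi(r,w))$ I obtain a jointly measurable integrand with $\mathbf{E}|g(\tau,W)|<+\infty$ and $h(\tau,\beta)=g(\tau,W)$ almost surely.

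Because $\tau$ and $W$ are independent under Assumption \ref{A:basic}, the standard freezing lemma (see, e.g., Kallenberg, \emph{Foundations of Modern Probability}) yields
$$\mathbf{E}\left[g(\tau,W)\,|\,\sigma(\tau)\right](\omega)=\bigl(\mathbf{E}[g(r,W)]\bigr)_{r=\tau(\omega)},\qquad\mathbf{P}\text{-a.s.},$$
and in particular $\mathbf{E}[g(\tau,W)\,|\,\tau=r]=\mathbf{E}[g(r,W)]$ for $\mathbf{P}_{\tau}$-almost every $r$. Both displayed formulas follow once $\mathbf{E}[g(r,W)]$ is identified with $\mathbf{E}[h(r,\beta^{r})]$.

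For this identification I observe that for each fixed $r>0$ the process $\phi(r,W)$ coincides with $W_{t}-\frac{t}{r}W_{r}$ on $[0,r]$, which is the classical Brownian bridge between $0$ and $0$ on $[0,r]$, and vanishes identically on $[r,+\infty)$ since there $r\vee t=t$ and $\phi(r,W)_{t}=W_{t}-W_{t}=0$. Thus $\phi(r,W)$ is a version of $\beta^{r}$ (extended by zero past $r$) on the canonical space $C$, and the two expectations therefore agree.

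The one point requiring care is the measurability in $r$ of the map $r\mapsto\mathbf{E}[h(r,\beta^{r})]$, which is needed so that the right-hand side of the second formula is a genuine $\sigma(\tau)$-measurable random variable and so that the freezing lemma applies without ambiguity. I would establish this by a monotone-class argument: measurability is immediate for integrands of product form $h(r,w)=f(r)H(w)$ with $f$ Borel bounded and $H$ bounded continuous on $C$, via dominated convergence in $r$ applied to $\mathbf{E}[H(\phi(r,W))]$, and then one extends to the full class of integrable measurable $h$ by the monotone class theorem combined with a routine truncation argument. This is the only mildly technical step; everything else reduces to writing $\beta$ as a deterministic function of $(\tau,W)$ and invoking independence.
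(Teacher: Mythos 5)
Your argument is correct. Note, however, that the paper itself does not prove this statement: it is quoted verbatim from \cite[Corollary 2.2]{key-2}, so there is no in-paper proof to compare with. Your route --- writing $\beta=\phi(\tau,W)$ with $\phi(r,w)_t=w_t-\frac{t}{r\vee t}w_{r\vee t}$, checking that for fixed $r$ the process $\phi(r,W)$ is the Brownian bridge on $[0,r]$ extended by zero (which is indeed the convention the paper uses later, cf.\ the ``(extended) Brownian bridge $\beta^r$'' in the proof of Theorem \ref{TEO:predictable tau} and Lemma \ref{lem:For-all-,}), and then invoking independence of $\tau$ and $W$ via the freezing lemma --- is exactly the standard argument one expects behind the cited corollary. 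Two small remarks: the measurability of $r\mapsto\mathbf{E}\left[h\left(r,\beta^{r}\right)\right]$, which you single out as the delicate point, comes for free from Fubini's theorem, since by Assumption \ref{A:basic} the pair $(\tau,W)$ has the product law $\mathbf{P}_{\tau}\otimes\mathbf{P}_{W}$ and $g(r,w)=h(r,\phi(r,w))$ is jointly measurable and $\mathbf{P}_{\tau}\otimes\mathbf{P}_{W}$-integrable; so the monotone-class/truncation step is unnecessary (though harmless). Also, joint measurability of $\phi$ should be checked on the space of continuous paths (where $(r,w)\mapsto w_{r\vee t}$ is a Carath\'eodory function), which suffices because $W$ and $\beta$ have continuous trajectories; with that observation the first displayed identity is just the defining property of the conditional expectation given $\tau=r$ applied to the second.
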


Lemma \ref{cor:LEM if--is} provides a useful connection between the law of the
Brownian bridge and the conditional law with respect to $\sigma\left(\tau\right)$
of a generic functional involving $\tau$, the Brownian motion $W$
and the process $\beta$ defined in equation (\ref{eq:info-proc}).  In particular, if $0<t<r$, the law of $\beta_{t}$, conditional on $\tau=r$, is
the same of that of $\beta_t ^r$.

For a proof of the following fact see \cite[Proposition 3.1 and Corollary 3.1]{key-2}.
\begin{lem}
\label{lem:For-all-,}For all $t>0$, $\left\{ \beta_{t}=0\right\} =\left\{ \tau\leq t\right\} ,\:\mathbf{P}$-a.s.
In particular, $\tau$ is an $\mathbb{F}^{\beta}$-stopping time.
\end{lem}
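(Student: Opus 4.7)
The plan is to verify the set equality by checking the two inclusions separately, the first by direct computation from the defining formula \eqref{eq:info-proc}, and the second via a conditioning argument using Lemma \ref{cor:LEM if--is}.

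First I would observe that the inclusion $\{\tau\leq t\}\subseteq\{\beta_t=0\}$ holds pointwise, not merely almost surely: on $\{\tau\leq t\}$ one has $\tau\vee t=t$, so substitution into \eqref{eq:info-proc} yields $\beta_t=W_t-\frac{t}{t}W_t=0$ deterministically.

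For the reverse inclusion, I would need to show $\mathbf{P}(\beta_t=0,\,\tau>t)=0$. The natural route is to condition on $\tau$: by Lemma \ref{cor:LEM if--is}, applied to $h(r,\omega)=\mathbb{I}_{\{\omega(t)=0\}}\mathbb{I}_{\{r>t\}}$, one has $\mathbf{P}(\beta_t=0\mid\tau=r)=\mathbf{P}(\beta_t^r=0)$ for $\mathbf{P}_\tau$-a.e.\ $r>t$. For $r>t$, the random variable $\beta_t^r$ is a centered Gaussian with variance $t(r-t)/r>0$, hence its law is absolutely continuous and places no mass at $0$. Integrating against $\mathbf{P}_\tau$ over $(t,\infty)$ then gives $\mathbf{P}(\beta_t=0,\,\tau>t)=0$, which is the missing inclusion modulo a null set.

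Combining the two inclusions yields the claimed $\mathbf{P}$-a.s.\ equality. For the stopping time assertion, I would note that $\beta_t$ is $\mathcal{F}_t^\beta$-measurable, so $\{\beta_t=0\}\in\mathcal{F}_t^\beta$, and since $\mathbb{F}^\beta$ was completed by $\mathcal{N}_P$ the a.s.\ equal set $\{\tau\leq t\}$ also lies in $\mathcal{F}_t^\beta$; as this holds for every $t>0$, $\tau$ is an $\mathbb{F}^\beta$-stopping time. No step here presents a genuine obstacle — the only point requiring care is quoting Lemma \ref{cor:LEM if--is} to reduce the conditional probability to a statement about the deterministic bridge $\beta^r$, whose marginal $\beta_t^r$ is standard and obviously diffuse.
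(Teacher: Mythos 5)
Your proof is correct. The paper itself does not prove this lemma but defers to \cite[Proposition 3.1 and Corollary 3.1]{key-2}; your argument is the natural one and almost certainly coincides with the one given there. The easy inclusion follows pointwise by substituting $\tau\vee t=t$ into (\ref{eq:info-proc}), and the nontrivial inclusion follows exactly as you say: applying Lemma \ref{cor:LEM if--is} with $h(r,f)=\mathbb{I}_{\{f(t)=0\}}\mathbb{I}_{\{r>t\}}$ reduces $\mathbf{P}(\beta_t=0,\,\tau>t)$ to $\int_{(t,\infty)}\mathbf{P}(\beta_t^r=0)\,\mathbf{P}_\tau(dr)$, which vanishes because $\beta_t^r\sim N(0,t(r-t)/r)$ with strictly positive variance for $0<t<r$. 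The stopping-time conclusion then follows from $\{\beta_t=0\}\in\mathcal{F}_t^\beta$ together with the completeness of $\mathbb{F}^\beta$ (and for $t=0$ one uses $\tau>0$ a.s.). The only cosmetic issue is that using $\omega$ as the path variable in $h(r,\omega)$ collides notationally with the sample point $\omega\in\Omega$ already in use; writing $h(r,f)$ with $f\in C$ would be cleaner.
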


\section{\label{sec:Inverse-Images-and}Hitting Probabilities and Capacities for Brownian Bridges}

\noindent Given a compact set $E\subseteq\left(0,+\infty\right)$ and
and an $\mathbb{R}^{n}$-valued standard
Brownian motion $B=\left(B_{t},\, t\geq0\right)$, the \textit{range}
of the set $E$ under $B$ is denoted by
\[
B(E):=\left\{ \left(x,\omega\right)\in\mathbb{R}^{n}\times\Omega:\,\exists t\in E\textrm{ such that }B_{t}\left(\omega\right)=x\right\} \subseteq\mathbb{R}^{n}\times\Omega.
\]
The relevant question is under which conditions on $E$ 
we have 
\begin{equation}
\mathbf{P}\left(B\left(E\right)\cap \left\lbrace 0\right\rbrace\neq\varnothing\right)=0,\label{eq:problem-1}
\end{equation}
and an answer to this question was provided by Khoshnevisan and Xiao in \cite{key-13}. Let $\mathcal{P}_{d}\left(E\times G\right)$ denotes the collection
of all Borel probability measures $\mu$ on $E\times G$ that are \textquotedblleft diffuse\textquotedblright{}
in the sense that $\mu\left(\left\{ t\right\} \times G\right)=0$
for all $t>0.$ For $\mu\in\mathcal{P}_{d}\left(E\times G\right)$
we define the\textit{ parabolic $0$-energy $\mathcal{E}\left(\mu\right)$ of $\mu$}
by 
\begin{equation}\label{E:parabolicenergy}
\mathcal{E}\left(\mu\right):={\displaystyle \intop_{E\times G}\intop_{E\times G}\frac{e^{-\left\Vert x-y\right\Vert ^{2}/\left(2|t-s|\right)}}{|t-s|^{\frac{n}{2}}}\mu\left(ds\times dx\right)\mu\left(dt\times dy\right)}.
\end{equation}
We quote \cite[Proposition 1.4]{key-13}, which implies
a characterization of (\ref{eq:problem-1}) in terms of the parabolic $0$-energy.

\begin{prop}
\label{prop:Suppose--KX}
Suppose $G\subset\mathbb{R}^{n}$ 
is compact and has Lebesgue measure 0. Then $\mathbf{P}\left(B\left(E\right)\cap G\neq\varnothing\right)>0$
if and only if there exists a probability measure $\mu\in \mathcal{P}_d(E\times G)$
such that $\mathcal{E}\left(\mu\right)<+\infty$.\end{prop}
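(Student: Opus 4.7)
The plan is to recognize the claim as a hitting-probability problem for the space--time Brownian motion $\widehat{B} := (t, B_t)_{t \geq 0}$ on $\mathbb{R}_+ \times \mathbb{R}^n$: the event $\{B(E) \cap G \neq \varnothing\}$ is precisely $\{\widehat{B} \text{ hits } E \times G\}$. Since the transition density of $\widehat{B}$ is the heat kernel $p_u(x,y) = (2\pi u)^{-n/2} \exp(-\|x-y\|^2/(2u))$, the parabolic $0$-energy in \eqref{E:parabolicenergy} coincides, up to a universal constant, with the energy of $\mu$ with respect to the symmetric kernel $p_{|t-s|}(x,y)$, which is the natural Green-type kernel for this Markov process. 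Proposition \ref{prop:Suppose--KX} is therefore a Kakutani--Taylor--Watson type characterization of positive hitting probability in terms of positive capacity.

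For the implication \emph{finite energy implies positive hitting probability}, I would run the standard second-moment (energy) method. Given $\mu \in \mathcal{P}_d(E \times G)$ with $\mathcal{E}(\mu) < +\infty$, set
\[
J_\varepsilon := \int_{E \times G} p_\varepsilon(B_t - x)\, \mu(dt \times dx), \qquad \varepsilon > 0.
\]
A Fubini computation delivers a uniform lower bound on $\mathbf{E}[J_\varepsilon]$, while the Chapman--Kolmogorov identity bounds $\mathbf{E}[J_\varepsilon^2]$ by a constant multiple of $\mathcal{E}(\mu)$. Paley--Zygmund then yields $\mathbf{P}(J_\varepsilon > 0) \geq c > 0$ uniformly in $\varepsilon$, and since $\{J_\varepsilon > 0\}$ forces the graph of $B$ to come within $O(\sqrt{\varepsilon})$ of the compact set $E \times G$, path continuity and compactness produce an accumulation point in $E \times G$ along a subsequence $\varepsilon_k \downarrow 0$.

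For the converse, assuming $\mathbf{P}(B(E) \cap G \neq \varnothing) > 0$, I would construct $\mu$ from the law of $\widehat{B}$ on the event of hitting $E \times G$: the cleanest realization is a normalization of the first-hitting distribution on $E \times G$, or, more robustly, a Choquet equilibrium measure produced by compact exhaustion and weak limits. The standard potential theory for $\widehat{B}$ then shows that the $p_{|t-s|}$-potential of $\mu$ is bounded, hence $\mathcal{E}(\mu) < +\infty$. The diffuseness $\mu(\{t\} \times G) = 0$ is automatic, because for any fixed $t$ the law of $B_t$ is absolutely continuous and $G$ has Lebesgue measure zero, so the hitting-time marginal of $\mu$ carries no atoms.

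The main obstacle is the converse direction: extracting a finite-energy measure from mere positivity of a hitting probability requires the full potential-theoretic machinery for $\widehat{B}$---Hunt's theorem, Choquet capacitability, and the crucial comparison of the associated capacity with the explicit energy functional in \eqref{E:parabolicenergy}. This identification, worked out carefully by Khoshnevisan and Xiao in \cite{key-13}, is the technical heart of Proposition \ref{prop:Suppose--KX}; any self-contained proof would essentially have to recapitulate that framework.
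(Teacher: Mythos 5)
The first thing to note is that the paper does not prove Proposition \ref{prop:Suppose--KX} at all: it is quoted verbatim from \cite[Proposition 1.4]{key-13}, so the paper's ``proof'' is the citation. Judged as a self-contained argument, your outline contains one fixable slip and one genuine gap. The slip is in the sufficiency direction: the Gaussian kernel $p_\varepsilon$ is strictly positive, so $J_\varepsilon>0$ holds almost surely for every $\varepsilon$, and ``Paley--Zygmund yields $\mathbf{P}(J_\varepsilon>0)\geq c$'' proves nothing. The correct use of the second-moment method is to bound $\mathbf{P}\left(J_\varepsilon\geq\tfrac12\mathbf{E}\left[J_\varepsilon\right]\right)\geq\left(\mathbf{E}\left[J_\varepsilon\right]\right)^{2}/\left(4\,\mathbf{E}\left[J_\varepsilon^{2}\right]\right)$, check that $\mathbf{E}\left[J_\varepsilon\right]$ is bounded below uniformly in $\varepsilon$ (this uses compactness of $E\subset(0,+\infty)$ and $G$, which also controls the marginal density factor in the second-moment computation), and then argue via Fatou along a sequence $\varepsilon_k\downarrow0$ that with probability at least $c$ the quantity $J_{\varepsilon_k}$ stays bounded away from $0$ infinitely often; only such a quantitative lower bound forces points $\left(t_k,x_k\right)\in E\times G$ with $\left|B_{t_k}-x_k\right|\to0$, after which compactness and path continuity give a hit. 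This is routine to repair.

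The genuine gap is the converse. Your plan --- take the hitting or equilibrium measure and show that its ``$p_{|t-s|}$-potential is bounded'' --- glosses over exactly what makes \cite{key-13} nontrivial: the space--time process $\left(t,B_t\right)$ is a Markov process whose potential kernel is the \emph{one-sided} heat kernel (transitions only forward in time), whereas the energy \eqref{E:parabolicenergy} is built from the kernel symmetrized in $|t-s|$. A maximum-principle or potential bound for the hitting measure controls only the one-sided energy; converting positivity of the hitting probability into finiteness of the two-sided energy for some \emph{diffuse} $\mu$ --- and this is precisely where the hypotheses that $G$ is Lebesgue-null and that $\mu$ may be taken diffuse intervene --- is the actual content of Khoshnevisan and Xiao's argument, which you explicitly defer to them. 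So as a blind proof the proposal is incomplete in this direction; as a reduction to the literature it coincides with what the paper itself does, namely cite \cite[Proposition 1.4]{key-13} without proof. (Your remark that the time-marginal of a hitting distribution has no atoms because $\mathbf{P}\left(B_t\in G\right)=0$ for each fixed $t$ is correct, and it is indeed where the measure-zero assumption on $G$ first enters; but if $\mu$ is instead produced abstractly as an equilibrium measure, diffuseness would need a separate argument.)
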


Earlier and related well known results on hitting probabilities can for instance be found in Doob, \cite{key-5}, Kaufmann, \cite{key-10}, Kaufman and Wu, \cite{key-11}, Watson, \cite{key-18}, Xiao, \cite{key-20}, and M\"orters and Peres, \cite[Chapter
8, Section 3]{key-15}.

We need an extension of Proposition \ref{prop:Suppose--KX} to the case of the Brownian bridge. 
For the convenience of the reader we recall the concepts of Riesz energies and capacities, details can be found in \cite[Section 4.3]{key-15}, \cite[Chapter 8]{key-14} or \cite[Section 3.1]{key-20}. For a Radon measure $\nu$ on $(0,+\infty)$ and $s\geq 0$, the
\emph{$s$-energy $\mathcal{I}_{s}\left(\nu\right)$ associated with the measure
$\nu$} is defined by
\[\mathcal{I}_{s}\left(\nu\right):=\intop_0^\infty\intop_0^\infty\left| x-y\right| ^{-s}\nu\left(dx\right)\nu\left(dy\right).\]
Given a set $A\subseteq (0,+\infty)$ let $\mathcal{P}\left(A\right)$ be the set of Borel probability  measures $\nu$ on $A$. For $s>0$, the \emph{Riesz $s$-capacity $\cpct_{s}\left(A\right)$
of $A$} is defined by
\[
\cpct_{s}\left(A\right):=\sup\left\{ \mathcal{I}_{s}\left(\nu\right)^{-1}:\,\nu\in\mathcal{P}\left(A\right)\right\} ,
\]
with the convention that $\cpct_{s}\left(\varnothing\right)=0$. The following corollary is a simple application of the previous proposition to the case $n=1$.

\begin{cor}
\label{cor:capacity-polarity}Let $B=\left(B_{t},\, t\geq0\right)$
be an $\mathbb{R}$-valued Brownian motion.
Then we have
$\mathbf{P}\left(B\left(E\right)\cap \left\lbrace 0\right\rbrace \neq\varnothing\right)=0$
if and only if $\cpct_{\frac{1}{2}}\left(E\right)=0$.
\end{cor}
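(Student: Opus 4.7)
The plan is to specialize Proposition \ref{prop:Suppose--KX} to the case $n=1$ and $G=\{0\}$, which is compact with Lebesgue measure zero. Under this specialization, every Borel probability measure $\mu$ on $E\times\{0\}$ can be identified with a Borel probability measure $\nu$ on $E$ via $\mu=\nu\otimes\delta_{0}$, and the diffuseness condition $\mu(\{t\}\times\{0\})=0$ for all $t>0$ becomes the condition that $\nu$ is atomless. The parabolic $0$-energy from (\ref{E:parabolicenergy}) collapses to
\begin{equation*}
\mathcal{E}(\mu)=\intop_{E}\intop_{E}\frac{e^{-0/(2|t-s|)}}{|t-s|^{1/2}}\,\nu(ds)\nu(dt)=\mathcal{I}_{1/2}(\nu),
\end{equation*}
so Proposition \ref{prop:Suppose--KX} reads: $\mathbf{P}(B(E)\cap\{0\}\neq\varnothing)>0$ if and only if there exists an atomless $\nu\in\mathcal{P}(E)$ with $\mathcal{I}_{1/2}(\nu)<+\infty$.

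The remaining step is to reconcile the restriction to atomless measures appearing on the potential-theoretic side of \cite{key-13} with the definition of $\cpct_{1/2}(E)$, which takes a supremum over \emph{all} of $\mathcal{P}(E)$. For this, I would observe that any measure $\nu\in\mathcal{P}(E)$ that charges some point $t_{0}\in E$ with mass $a>0$ automatically satisfies $\mathcal{I}_{1/2}(\nu)=+\infty$, since the diagonal atom $(\nu\otimes\nu)(\{(t_{0},t_{0})\})=a^{2}>0$ is integrated against the kernel $|t-s|^{-1/2}$, which diverges on the diagonal. Therefore the existence of some $\nu\in\mathcal{P}(E)$ with $\mathcal{I}_{1/2}(\nu)<+\infty$ is equivalent to the existence of an \emph{atomless} such $\nu$, and both are equivalent to $\cpct_{1/2}(E)>0$ by the very definition of the Riesz capacity.

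Combining these two observations, $\mathbf{P}(B(E)\cap\{0\}\neq\varnothing)>0$ is equivalent to $\cpct_{1/2}(E)>0$, and passing to the contrapositive yields the stated corollary. I do not anticipate any serious technical obstacle here: the whole argument is essentially a dictionary between the space-time setup of \cite{key-13} and the one-dimensional Riesz framework, with the only subtlety being the atoms/diffuseness bookkeeping described above.
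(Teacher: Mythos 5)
Your proposal is correct and follows essentially the same route as the paper's own proof: specialize Proposition~\ref{prop:Suppose--KX} to $n=1$, $G=\{0\}$, identify $\mu=\nu\otimes\delta_0$ so that $\mathcal{E}(\mu)=\mathcal{I}_{1/2}(\nu)$, and observe that any $\nu$ with an atom has $\mathcal{I}_{1/2}(\nu)=+\infty$, so the restriction to diffuse $\mu$ costs nothing in the capacity supremum. Your explicit remark about the diagonal singularity just makes more verbose the same observation the paper states as ``given $\nu$ with $\mathcal{I}_{1/2}(\nu)<+\infty$ we necessarily have $\nu(\{t\})=0$.''
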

\begin{proof}
Any
$\mu \in \mathcal{P}_d(E\times \left\lbrace 0\right\rbrace)$ is equal to the product of a finite measure
$\nu$ on $E$ which assigns zero to any single element set $\left\lbrace t\right\rbrace$, $t>0$, and a finite point-mass measure $\delta_{0}$ on $\left\{ 0\right\} $, i.e. $\mu=\nu\otimes \delta_0$. We may assume that both $\nu$
and $\delta_{0}$ are members of $\mathcal{P}\left(E\right)$. The parabolic
0-energy (\ref{E:parabolicenergy}) associated with $\mu$ then equals
\[
\mathcal{E}\left(\mu\right)=\intop_{E}\intop_{E}|t-s|^{-\frac{1}{2}}\nu\left(ds\right)\nu\left(dt\right)=\mathcal{I}_{\frac{1}{2}}\left(\nu\right).
\] 
On the other hand, given a probability measure $\nu\in\mathcal{P}(E)$ 
with $\mathcal{I}_{\frac{1}{2}}\left(\nu\right)<+\infty$ we necessarily have $\nu(\left\lbrace t\right\rbrace)=0$ for any $t>0$, and therefore $\mu:=\nu\otimes\delta_0$ is a member of $\mathcal{P}_d(E\times \left\lbrace 0\right\rbrace)$. Consequently there exists $\mu\in \mathcal{P}_{d}\left(E\times \left\lbrace 0\right\rbrace\right)$ with $\mathcal{E}(\mu)<+\infty$ if and only if $\cpct_{\frac{1}{2}}\left(E\right)>0$, and by Proposition \ref{prop:Suppose--KX} the result follows.
\end{proof}

The above easily carry over to the Brownian bridge case. 
\begin{lem}
\label{lem:result_for_std_BB} Given a closed subset $E$ of $\left(0,r\right]$ set $\gamma^{r}_E\left(\omega\right):=\inf\left\{ t\in E:\,\beta_{t}^{r}\left(\omega\right)=0\right\}$. We have 
$\mathbf{P}\left(\gamma^{r}_E<r\right)=0$ if and only if 
$\cpct_{\frac{1}{2}}\left(E\right)=0$.
\end{lem}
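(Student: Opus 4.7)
The plan is to reduce the statement to Corollary \ref{cor:capacity-polarity} by exploiting the mutual absolute continuity between the Brownian bridge $\beta^{r}$ and the Brownian motion $W$ on any time interval $[0,T]$ with $T<r$. Since $\beta^{r}_{r}=0$ deterministically, only hitting times strictly less than $r$ contribute to the event $\{\gamma^{r}_{E}<r\}$. I would set $E_{n}:=E\cap\bigl[\tfrac{1}{n},r-\tfrac{1}{n}\bigr]$, a compact subset of $(0,r)$, and obtain the set identity
\[
\{\gamma^{r}_{E}<r\}=\bigcup_{n\geq 1}\bigl\{\omega:\exists\,t\in E_{n}\text{ with }\beta^{r}_{t}(\omega)=0\bigr\},
\]
reducing the problem to compact sets bounded away from both $0$ and $r$.

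The main step, and also the main obstacle, is to transfer the hitting statement from $\beta^{r}$ to the standard Brownian motion $W$ on each $E_{n}$. Here I would appeal to the Markov property of $W$ conditioned on $W_{r}=0$: for any $T<r$ the law of $(\beta^{r}_{t})_{0\leq t\leq T}$ is absolutely continuous with respect to the law of $(W_{t})_{0\leq t\leq T}$ on $C([0,T])$, with strictly positive Radon--Nikodym density proportional to $p(r-T,W_{T})$, where $p(s,x)=(2\pi s)^{-1/2}e^{-x^{2}/(2s)}$ is the Gaussian kernel. The two measures being equivalent on $C([0,T])$, the event that a path hits zero somewhere on $E_{n}$ has the same null status under both, and Corollary \ref{cor:capacity-polarity} applied to each compact $E_{n}\subset(0,r)$ gives
\[
\mathbf{P}\bigl(\exists\,t\in E_{n}:\beta^{r}_{t}=0\bigr)=0\ \Longleftrightarrow\ \cpct_{\frac{1}{2}}(E_{n})=0.
\]

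To close the equivalence between $\mathbf{P}(\gamma^{r}_{E}<r)=0$ and $\cpct_{\frac{1}{2}}(E)=0$ it remains to compare the capacities of $E$ and of the exhausting family $(E_{n})$. The implication $\cpct_{\frac{1}{2}}(E)=0\Rightarrow\cpct_{\frac{1}{2}}(E_{n})=0$ is immediate from monotonicity. For the converse, I would argue by contradiction: any $\nu\in\mathcal{P}(E)$ with $\mathcal{I}_{\frac{1}{2}}(\nu)<+\infty$ cannot charge any single point, since a point mass at $a$ would contribute an infinite diagonal term to $\mathcal{I}_{\frac{1}{2}}(\nu)$; hence $\nu(E_{n})\to 1$ as $n\to\infty$, and for $n$ large the normalization $\nu_{n}:=\nu|_{E_{n}}/\nu(E_{n})$ is a probability measure on $E_{n}$ satisfying $\mathcal{I}_{\frac{1}{2}}(\nu_{n})\leq\mathcal{I}_{\frac{1}{2}}(\nu)/\nu(E_{n})^{2}<+\infty$, contradicting $\cpct_{\frac{1}{2}}(E_{n})=0$.
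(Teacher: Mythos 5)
Your proposal is correct and follows essentially the same route as the paper: both exhaust $E$ by compacts bounded away from $0$ and $r$, transfer the hitting question from the bridge $\beta^{r}$ to a Brownian motion via the equivalence of the two laws on $[0,T]$ for $T<r$ (the paper phrases this via Girsanov, you via the explicit Gaussian density, which is the same thing), apply Corollary \ref{cor:capacity-polarity}, and pass to the limit. The only notable difference is that where the paper simply cites the Choquet-capacity property to get $\cpct_{\frac{1}{2}}(E)=\sup_{n}\cpct_{\frac{1}{2}}(E_{n})$, you give a direct, self-contained argument by restricting and renormalizing an energy-finite measure, which also avoids the paper's implicit assumption that $\min E$ exists.
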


\begin{proof}
Let be $0<T<r$ and let $\mathbf{Q}^{T}$ be a probability measure
equivalent to $\mathbf{P}$ such that $\left(\beta^{r} _t,\,0\leq t\leq T\right)$ is a $\mathbf{Q}^{T}$
Brownian motion on $\left[0,T\right]$ (this is always possible for every $T<r$; it suffices to apply Girsanov theorem after noticing that the process $\left(\frac{\beta^r _t}{r-t},\,0\leq t\leq T\right)$ satisfies the Novikov condition). Let $0<\delta < \min E$. Since 
obviously $\gamma^{r}_E>\delta$, we have $\left\{ \beta^{r}\left(E\cap\left[\delta,T\right]\right)\cap\left\{ 0\right\} \neq\varnothing\right\} =\left\{ \gamma^{r}_E\leq T\right\}$.
By Corollary \ref{cor:capacity-polarity} we have $\cpct_{\frac12}(E\cap\left[\delta,T\right])=0$ if and only if 
$\mathbf{Q}^{T}\left(\gamma^{r}_E\leq T\right)=0$. But as $\mathbf{P}$
is equivalent to $\mathbf{Q}^{T}$, this is the case if and only if $\mathbf{P}\left(\gamma^{r}_E\leq T\right)=0$. Since $\cpct_{\frac12}(E)=\sup_{T\leq r} \cpct_{\frac12}(E\cap\left[\delta,T\right])$ (recall that the Riesz capacities are Choquet capacities), the result now follows by letting $T\uparrow r$.
\end{proof}

\section{\label{sec:PredictableDefTime}Predictable Default Times}

\noindent To fix notation we recall the definition of a predictable stopping
time, see for instance \cite[Chapter 4, Section 3]{key-4}. The \textit{predictable $\sigma$-algebra}
associated to a filtration $\mathbb{F}=\left(\mathcal{F}_{t}\right)_{t\geq0}$,
denoted by $\mathcal{P}\left(\mathbb{F}\right)$, is the $\sigma$-algebra
defined on $\mathbb{R}_{+}\times\Omega$ generated by all $\mathbb{F}$-adapted processes
with left-continuous paths on $\left(0,+\infty\right)$. 
A positive random variable $T$ is a called a \textit{predictable stopping time with respect to $\mathbb{F}$}
if the (random) subset of $\mathbb{R}_{+}\times\Omega$ given by
$\left\{ \left(t,\omega\right):\, T\left(\omega\right)\leq t<+\infty\right\}$ is measurable with respect to $\mathcal{P}\left(\mathbb{F}\right)$. 
If $T$ is a predictable stopping time, then there exists a sequence $T_n,\,n\geq0$ of stopping times (the so-called \textit{announcing sequence}) converging to $T$ from below (see \cite[Theorem N. 76]{key-4}). 
For example, the first entry time $T$ of a continuous process $X$ in a closed set $C$ is predictable and announced by the sequence of stopping times where the distance of $X$ from $C$ is equal to $\frac{1}{n}$.

By $\Gamma:=\textrm{supp}\:\mathbf{P}_{\tau}$ we denote the support of the law $\mathbf{P}_{\tau}$ of the default time $\tau$. It is clear that $\Gamma$ is closed and
$\Gamma\cap\left[\delta,T\right]$ is compact for every $T,\delta>0$
such that $T>\delta$. We now consider the two-dimensional process $X=\left(X_{t},\, t\geq0\right)$
given by 
\[
X_{t}:=\left[\begin{array}{c}
\textrm{dist}_\Gamma\left(t\right)\\
\beta_{t},
\end{array}\right],\]
where $\textrm{dist}_\Gamma\left(t\right):=\min_{r\in\Gamma}|r-t|$ denotes the Euclidean distance of $t\in\mathbb{R}_+$ to $\Gamma$. Clearly the process $X$ is continuous and $\mathbb{F}^{\beta}$-adapted. By
$\gamma^X_0$ we denote the first hitting time of zero of the process
$X$, $\gamma^X_0:=\inf\left\{ t>0:\, X_{t}=0\right\}$.

\begin{rem}
\label{rem:Since--is}\mbox{}
\begin{enumerate}
\item[(i)] Since $X$ is continuous and adapted to the 
filtration $\mathbb{F}^{\beta}$, its first hitting time $\gamma^X_0$
of the closed set $\left\{ 0\right\} $ is predictable with respect to $\mathbb{F}^{\beta}$.
\item[(ii)] Since $X_{\tau}=0$ by definition of $X$, it follows that $\gamma_0^X\leq\tau$.
\end{enumerate}
\end{rem}

In view of this remark, Theorem \ref{TEO:predictable tau1} results from the following. 

\begin{thm}
\label{TEO:predictable tau}
Assume that $0\notin\Gamma$ and that for all $T>0$ we have $\cpct_{\frac{1}{2}}\left(\Gamma\cap\left[0,T\right]\right)=0$. Then we have $\tau=\gamma_0^X$
$\mathbf{P}$-a.s. 

\end{thm}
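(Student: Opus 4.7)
The goal is to establish the reverse inequality $\gamma_0^X \geq \tau$ almost surely, since Remark \ref{rem:Since--is}(ii) already furnishes $\gamma_0^X \leq \tau$. From the definition of $X$, the event $\{\gamma_0^X < \tau\}$ coincides with $\{\exists\, t \in \Gamma \cap (0,\tau): \beta_t = 0\}$, so it suffices to show this event has probability zero.

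The plan is to condition on $\sigma(\tau)$ via Lemma \ref{cor:LEM if--is}. Introduce the jointly measurable functional
\[
h(r, w) := \mathbf{1}\{\exists\, t \in \Gamma \cap (0,r): w(t) = 0\}
\]
on $(0, +\infty) \times C$, so that $\mathbf{1}\{\gamma_0^X < \tau\} = h(\tau, \beta)$. Lemma \ref{cor:LEM if--is} then yields
\[
\mathbf{E}\bigl[h(\tau, \beta) \mid \sigma(\tau)\bigr](\omega) = \mathbf{P}\bigl(\exists\, t \in \Gamma \cap (0,r): \beta^r_t = 0\bigr)\big|_{r = \tau(\omega)}, \quad \mathbf{P}\text{-a.s.},
\]
so it suffices to show the right-hand side vanishes for $\mathbf{P}_\tau$-a.e.\ $r$.

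Fix such an $r$; in particular $r \in \Gamma$ since $\Gamma = \textrm{supp}\,\mathbf{P}_\tau$. Because $0 \notin \Gamma$ and $\Gamma$ is closed, the set $E_r := \Gamma \cap (0, r]$ is a closed subset of $(0, r]$ bounded away from $0$. Monotonicity of the Riesz capacity combined with the hypothesis $\cpct_{\frac{1}{2}}(\Gamma \cap [0, r]) = 0$ gives $\cpct_{\frac{1}{2}}(E_r) = 0$. Applying Lemma \ref{lem:result_for_std_BB} to $E_r$ produces $\mathbf{P}(\gamma^r_{E_r} < r) = 0$, and since $E_r \subseteq (0, r]$ the event $\{\gamma^r_{E_r} < r\}$ is precisely $\{\exists\, t \in \Gamma \cap (0,r): \beta^r_t = 0\}$. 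Integrating against $\mathbf{P}_\tau$ yields $\mathbf{P}(\gamma_0^X < \tau) = 0$, and combining with Remark \ref{rem:Since--is}(ii) we conclude $\tau = \gamma_0^X$ $\mathbf{P}$-a.s.

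The essential difficulty bridged by this argument is that Lemma \ref{lem:For-all-,} provides the identity $\{\beta_t = 0\} = \{\tau \leq t\}$ only pointwise in $t$: a generic Brownian bridge $\beta^r$ has a nontrivial zero set inside $(0, r)$, and the pointwise identity is not strong enough to rule out a coincidence between this random zero set and the (possibly uncountable) deterministic set $\Gamma$. It is exactly the capacity criterion of Lemma \ref{lem:result_for_std_BB}, applied under the smallness assumption $\cpct_{\frac{1}{2}}(\Gamma \cap [0,T]) = 0$, that closes this gap.
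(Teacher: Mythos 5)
Your argument is correct and follows essentially the same route as the paper's: reduce to showing $\mathbf{P}(\gamma_0^X<\tau)=0$, disintegrate via Lemma \ref{cor:LEM if--is} against $\mathbf{P}_\tau$, and apply the bridge capacity criterion of Lemma \ref{lem:result_for_std_BB}. You are a bit more explicit than the paper in two minor respects — you spell out the measurable functional $h$ to which Lemma \ref{cor:LEM if--is} is applied, and you restrict to $E_r=\Gamma\cap(0,r]$ so that the hypothesis of Lemma \ref{lem:result_for_std_BB} (a closed subset of $(0,r]$) is literally satisfied, whereas the paper passes $\Gamma$ itself to $\gamma_\Gamma^r$ — but these are presentational refinements, not a different argument.
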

\begin{proof}
By Remark \ref{rem:Since--is} (ii) it suffices to show that $\mathbf{P}\left(\gamma_0^X<\tau\right)=0$. Recall the notation $\gamma_\Gamma^r=\inf\left\{ t\in\Gamma:\,\beta_{t}^{r}=0\right\}$. Conditionally on $\tau=r$, the process $\beta$ has the same law as the (extended)
Brownian bridge $\beta^{r}$. Consequently 
\[\mathbf{P}\left(\gamma_0^X<\tau\right) =\intop_{\left(0,+\infty\right)}\mathbf{P}\left(\gamma_0^X<\tau|\tau=r\right)\mathbf{P}_{\tau}\left(dr\right) =\intop_{\Gamma}\mathbf{P}\left(\gamma^{r}_\Gamma<r\right)\mathbf{P}_{\tau}\left(dr\right),\]
where the second equality is a consequence of Lemma \ref{cor:LEM if--is}. Using Lemma \ref{lem:result_for_std_BB},
we immediately obtain $\mathbf{P}\left(\gamma^{r}_\Gamma<r\right)=0$,
implying that $\mathbf{P}\left(\gamma_0^X<\tau\right)=0$.
\end{proof}

\end{document}